\documentclass[abstract,twocolumn,fontsize=10pt,DIV=27]{scrartcl}

\usepackage{amsmath}
\usepackage{amssymb}
\usepackage{amsthm}
\usepackage{enumitem}
\usepackage[colorlinks = true, linkcolor = blue, citecolor = blue]{hyperref}
\usepackage[nameinlink]{cleveref}
\usepackage{graphicx}
\usepackage{natbib}
\usepackage{wrapfig}

\setcapindent{0pt}
\setkomafont{caption}{\footnotesize}

\newcaptionname{USenglish}{\figurename}{Fig.}

\theoremstyle{definition}
\newtheorem{assumption}{Assumption}
\newtheorem{definition}{Definition}
\newtheorem{example}{Example}
\newtheorem{proposition}{Proposition}
\newtheorem{remark}{Remark}
\newtheorem{theorem}{Theorem}

\crefname{equation}{}{}
\crefname{assumption}{Assumption}{Assumptions}
\crefname{proposition}{Proposition}{Propositions}

\title{Extremum Seeking with Double Integrators \\ in the Presence of Local Extrema}
\author{Raik Suttner, \ Christian Ebenbauer, \ and \ Sergey Dashkovskiy}
\date{}

\begin{document}
\maketitle
\begin{abstract}
We study the problem of global extremum seeking in the presence of local extrema. We investigate two different perturbation-based methods: $\vphantom{(}$1) a well-known classical extremum seeking scheme for steady-state output optimization, and $\vphantom{(}$2) a source seeking scheme for a two-dimensional point mass. In each of these two scenarios, the closed-loop system involves a damped double integrator subject to an oscillatory force. An averaging analysis reveals that the respective averaged system is again a damped double integrator, but now subject to a potential force. The potential force is given by the gradient of a locally averaged objective function. Such a function is less prone to have undesired local extrema and is therefore better suited for global optimization. We provide sufficient conditions for semi-global practical uniform asymptotic stability of the closed-loop systems. The sufficient conditions only involve assumptions on the averaged objective function but not the original one.
\end{abstract}

\section{Introduction}\label{sec:1}
The intention of extremum seeking control is to steer a system towards a state where an output function attains an extreme value. Many different methods have been proposed \cite{Scheinker2024}; such as biology-inspired stochastic algorithms \cite{LiuBook}, differential geometric strategies for systems on manifolds \cite{Duerr2014}, or delay- and PDE-based schemes \cite{OliveiraBook}. Global stability results for extremum seeking systems are typically proved under the assumption that the output (or objective) function has a unique global extremum but no other critical point \cite{Tan2006}.

Global extremum seeking in the presence of local extrema is a very challenging problem. In simulations, it is often observed that the choice of the employed dither signals is decisive for overcoming local extrema \cite{Nesic2006}, \cite{Wang2016}, \cite{Bhattacharjee2021}. A global stability result for functions with local extrema is presented in \cite{Tan2009}, which is based on a bifurcation analysis of equilibria for different dither amplitudes. The dither amplitudes in \cite{Tan2009} are initially large and then decrease slowly to zero. Extensions to schemes with adaptive dither amplitudes can be found, e.g., in \cite{Ye2020}, \cite{Mimmo2023}. A sampled-data approach without dither signals is presented in \cite{Khong20132}, where a known numerical optimization method is used for finding a global extremum within a user-prescribed compact set. Another perspective on global extremum seeking, which is very similar to the one in the present paper, can be found in \cite{Wildhagen2018}: It is conjectured that extremum seeking control optimizes not the original objective function but a different one.

In the recent studies \cite{Suttner20241}, \cite{Suttner20243}, and \cite{Suttner20261}, suitably designed extremum seeking schemes provide access to the gradient of a locally averaged objective function. To this end, the values of the objective function are measured and integrated along a periodic trajectory in the state space. In \cite{Suttner20241} and \cite{Suttner20243}, this approach is investigated for a source-seeking unicycle, where the circular motion of the sensor (in dimension $2$) allows integration of the signal along circles. In \cite{Suttner20261}, a nearly spherical motion of a single integrator point in dimension $\geq2$ allows integration of the objective function over spheres. Using the divergence theorem, one can show that the integral of the objective function over a circle (resp.~sphere) is equal to the gradient of a locally averaged objective function, where the local average is taken over the enclosed disk (resp.~ball). The state of the closed-loop system is driven into the gradient direction of the averaged objective function. This property can be very beneficial for the purpose of global optimization. A local average can ``wash out'' undesired local extrema in which a standard gradient-based method would get stuck.

In the present paper, we extended the above concept of a locally averaged objective function to two further classes of extremum seeking systems. First, we study a known classical extremum seeking scheme for steady-state output optimization. The filter dynamics of this scheme are interpreted as the motion of a double integrator point mass under periodic forcing. For a suitable choice of the control parameters, we show that the double integrator is driven into the gradient direction of a locally averaged steady-state output function. Secondly, we study the problem of source seeking with a two-dimensional double integrator point mass. Again, for a suitable choice of the control parameters, we show that the double integrator is driven into the gradient direction of a locally averaged signal function. For each of the two types of systems, we provide semi-global stability results. The results only involve assumptions on the averaged objective function and do not necessarily exclude local extrema of the actual objective function. We provide explicit examples of objective functions with local extrema to which the semi-global stability results can be applied. The proposed methods are also tested in numerical simulations.

The paper offers a novel point of view on the behavior of two extremum seeking schemes in the presence of local extrema. It provides a precise mathematical explanation why these two schemes have the ability to overcome local extrema if the dither amplitudes are chosen in a suitable way. In contrast to previous work, which focused on first-order kinematic gradient systems, in the present paper, we investigate second-order dynamic gradient systems of locally averaged objective functions. This way, we combine two potentially beneficial features for overcoming local extrema: local averaging and momentum.

%
%
%
%

\section{Preliminaries}\label{sec:02}
As usual, let $\mathcal{KL}$ denote the set of all continuous functions $\beta\colon\mathbb{R}_{\geq0}\times\mathbb{R}_{\geq0}\to\mathbb{R}_{\geq0}$ such that $\vphantom{(}$1) for each $t\in\mathbb{R}_{\geq0}$: $\beta(0,t)=0$ and $\beta(\cdot,t)$ is strictly increasing, as well as $\vphantom{(}$2) for each $r\in\mathbb{R}_{\geq0}$: $\beta(r,\cdot)$ is non-increasing and $\beta(r,t)\to0$ as $t\to\infty$. In the subsequent definition, we consider a time- and parameter-dependent system of the form%
\begin{equation}\label{eq:01}
\dot{x} \ = \ g(x,t,\epsilon,\delta),
\end{equation}%
where $g\colon\mathbb{R}^n\times\mathbb{R}\times\mathbb{R}_{>0}\times\mathbb{R}_{>0}\to\mathbb{R}^n$ is smooth, $x\in\mathbb{R}^n$ is the state, $t\in\mathbb{R}$ is the time, and $\epsilon,\delta\in\mathbb{R}_{>0}$ are parameter.%
\begin{definition}\label{definition:01}
Let $x^\ast\in\mathbb{R}^n$. The time- and parameter-dependent system \cref{eq:01} is said to be \emph{semi-globally practically uniformly asymptotically stable} (SGPUAS) w.r.t.~$x^\ast$ if there exists $\beta\in\mathcal{KL}$ such that, for all $r,\sigma>0$, there exists $\delta_0>0$ such that, for every $\delta\in(0,\delta_0)$, there exists $\epsilon_0>0$ such that, for every $\epsilon\in(0,\epsilon_0)$, every $x_0\in\mathbb{R}^n$ with $|x_0-x_\ast|\leq{r}$, and every $t_0\in\mathbb{R}$, the solution $x$ of \cref{eq:01} with initial condition $x(t_0)=x_0$ satisfies%
\begin{equation}\label{eq:02}
|x(t)-x^\ast| \ \leq \ \beta(|x_0-x^\ast|, t-t_0) + \sigma
\end{equation}%
for every $t\geq{t_0}$, where $|\cdot|$ denotes the Euclidean norm. \hfill$\bullet$
\end{definition}%
If \cref{eq:01} is time- and parameter-independent, then the property of being SGPUAS reduces to the usual property of a system being \emph{globally asymptotically stable} (GAS).

%
%
%
%

\section{A new interpretation of a classical extremum seeking scheme}\label{sec:03}
In this section, we investigate how the extremum seeking scheme in \Cref{figure:01} from reference \cite{Krstic2000} can be used for global optimization in the presence of local extrema.

%
%

\subsection{Problem statement and control law}\label{sec:03:1}
Consider a single-input single-output system of the form%
\begin{equation}\label{eq:03}
\dot{x} \ = \ f(x,u), \qquad y \ = \ h(x),
\end{equation}%
where $x\in\mathbb{R}^n$ is the state, $u\in\mathbb{R}$ is the input, $y\in\mathbb{R}$ is the output, and $f\colon\mathbb{R}^n\times\mathbb{R}\to\mathbb{R}^n$ and $h\colon\mathbb{R}^n\to\mathbb{R}$ are smooth. The goal is to asymptotically stabilize the system around a state where the output function $h$ attains a maximum value. Suppose that a smooth control law $u=\alpha(x,\theta)$ is given, which depends on a scalar parameter $\theta$. It is assumed that the resulting parameter-dependent system%
\begin{equation}\label{eq:04}
\dot{x} \ = \ f(x,\alpha(x,\theta))
\end{equation}%
has the following equilibrium and stability properties.%
\begin{assumption}[{\cite[Assumption~1]{Tan2009}}]\label{assumption:01}
There exists a smooth function $l\colon\mathbb{R}\to\mathbb{R}^n$ such that, for every $x\in\mathbb{R}^n$ and every $\theta\in\mathbb{R}$, the following equivalence holds: $f(x,\alpha(x,\theta))=0$ if and only if $x=l(\theta)$.\hfill$\bullet$
\end{assumption}%
\begin{assumption}[{\cite[Assumption~2]{Tan2009}}]\label{assumption:02}
There exists $\beta\in\mathcal{KL}$ such that, for every fixed control parameter $\theta\in\mathbb{R}$, we have%
\begin{equation}\label{eq:05}
|x(t)-l(\theta)| \ \leq \ \beta(|x(0)-l(\theta)|, t)
\end{equation}%
for every solution $x$ of \cref{eq:04} and every $t\geq0$.\hfill$\bullet$
\end{assumption}%
Under the above assumptions, we will analyze the classical extremum seeking scheme in \Cref{figure:01}, which is described by the equations%
\begin{subequations}\label{eq:06}%
\begin{align}
\dot{x} & \ = \ f\big(x,\alpha\big(x,\hat{\theta}+a\,\sin\epsilon{t}\big)\big), \label{eq:06:a} \allowdisplaybreaks \\
\dot{\hat{\theta}} & \ = \ k\,\xi, \label{eq:06:b} \allowdisplaybreaks \\
\dot{\xi} & \ = \ - \omega_l\,\xi + \omega_l\,(y-\eta)\,a\,\sin(\epsilon{t}), \label{eq:06:c} \allowdisplaybreaks \\
\dot{\eta} & \ = \ - \omega_h\,\eta + \omega_h\,y, \label{eq:06:d}
\end{align}%
\end{subequations}%
where $\epsilon$, $\omega_h$, $\omega_l$, $k$, $a$ are suitably selected positive parameters. We investigate \cref{eq:06} for the choice of parameters%
\begin{equation}\label{eq:07}%
\omega_h \ = \ \epsilon\,\delta\,\omega_H', \qquad \omega_l \ = \ \epsilon\,\delta\,\omega_L', \qquad k \ = \ \epsilon\,\delta\,K',
\end{equation}%
where $\epsilon,\delta$ are sufficiently small positive control parameter and $\omega_H'$, $\omega_L'$, and $K'$ are (not necessarily small) positive constants.%
\begin{remark}\label{remark:01}
Our choice of parameters in \cref{eq:07} is the same as in \cite{Krstic2000} and \cite{Tan2006}. The choice of parameters in \cref{eq:07} is also used for a different control scheme in \cite{Tan2009} for global extremum seeking in the presence of local extrema. We study \cref{eq:06} for the choice of parameters \cref{eq:07} in the time scale
\begin{equation}\label{eq:08}
\tau \ = \ \epsilon\,\delta\,{t}
\end{equation}
in the limits $\epsilon,\delta\to0$. Compared to the analysis in \cite{Krstic2000}, \cite{Tan2006}, \cite{Tan2009}, we provide a novel interpretation of the limit system, namely in terms of a locally averaged objective function. As we shall see below, this novel interpretation of the limit system provides a precise explanation why the scheme in \cref{eq:06} has the ability to overcome local extrema. \hfill$\bullet$
\end{remark}%
\begin{figure}%
\centering\includegraphics[scale=1.05]{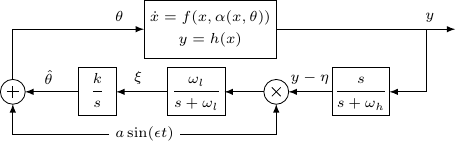}%
\caption{A classical extremum seeking scheme from \cite{Krstic2000}.}%
\label{figure:01}%
\end{figure}%

%
%

\subsection{Analysis and theoretical results}\label{sec:03:2}
To formulate a stability result in the terminology of \Cref{definition:01}, we use the time scale $\tau=\epsilon\delta{t}$ of the filters. Then, for the choice of parameters in \cref{eq:07}, system \cref{eq:06} reads%
\begin{subequations}\label{eq:09}%
\begin{align}
\epsilon\,\delta\,\dot{x} & \ = \ f\big(x,\alpha\big(x,\hat{\theta}+a\,\sin\big(\tfrac{\tau}{\delta}\big)\big)\big), \label{eq:09:a} \allowdisplaybreaks \\
\dot{\hat{\theta}} & \ = \ K'\,\xi, \label{eq:09:b} \allowdisplaybreaks \\
\dot{\xi} & \ = \ - \omega_L'\,\xi + \omega_L'\,\big(h(x)-\eta\big)\,a\,\sin\big(\tfrac{\tau}{\delta}\big), \label{eq:09:c} \allowdisplaybreaks \\
\dot{\eta} & \ = \ - \omega_H'\,\eta + \omega_H'\,h(x). \label{eq:09:d}
\end{align}%
\end{subequations}%
Let $l\colon\mathbb{R}\to\mathbb{R}^n$ be the smooth map from \Cref{assumption:01}. Define the steady-state output function $\psi\colon\mathbb{R}\to\mathbb{R}$ by%
\begin{equation}\label{eq:10}
\psi(\hat{\theta}) \ := \ h(l(\hat{\theta})).
\end{equation}%
The intention of the extremum seeking scheme \cref{eq:09} is maximization of $\psi$. If we ``freeze'' the state $x$ in \cref{eq:09:a} at its equilibrium value $x=l(\hat{\theta}+a\sin(\tau/\delta))$ and substitute it into \cref{eq:09:c} and \cref{eq:09:d}, then we get the so-called ``reduced system''%
\begin{subequations}\label{eq:11}%
\begin{align}
\ddot{\tilde{\theta}} & \ = \ - \omega_L'\,\dot{\tilde{\theta}} \label{eq:11:a} \\
& \qquad + K'\,\omega_L'\,\big(\psi(\tilde{\theta}+a\,\sin\big(\tfrac{\tau}{\delta}\big))-\eta\big)\,a\,\sin\big(\tfrac{\tau}{\delta}\big), \label{eq:11:b} \allowdisplaybreaks \\
\dot{\tilde{\eta}} & \ = \ - \omega_H'\,\tilde{\eta} + \omega_H'\,\psi(\tilde{\theta}+a\,\sin\big(\tfrac{\tau}{\delta}\big)). \label{eq:11:c}
\end{align}%
\end{subequations}%
A standard averaging argument can be applied to relate the ``reduced system'' \cref{eq:11} to its averaged system%
\begin{subequations}\label{eq:12}%
\begin{align}
\ddot{\bar{\theta}} & \ = \ - \omega_L'\,\dot{\bar{\theta}} + K'\,\omega_L'\,\bar{G}_a(\bar{\theta}), \allowdisplaybreaks \\
\dot{\bar{\eta}} & \ = \ - \omega_H'\,\bar{\eta} + \omega_H'\,\bar{z}_a(\bar{\theta}),
\end{align}%
\end{subequations}%
where%
\begin{subequations}\label{eq:13}%
\begin{align}
\bar{G}_a(\bar{\theta}) & \ := \ \frac{1}{2\pi}\int_0^{2\pi}\psi(\bar{\theta}+a\,\sin(\sigma))\,a\,\sin(\sigma)\,\mathrm{d}\sigma, \allowdisplaybreaks \\
\bar{z}_a(\bar{\theta}) & \ := \ \frac{1}{2\pi}\int_0^{2\pi}\psi(\bar{\theta}+a\,\sin(\sigma))\,\mathrm{d}\sigma.
\end{align}%
\end{subequations}%
A direct computation, using integration by parts and integration by substitution, reveals that%
\begin{equation}\label{eq:14}
\bar{G}_a(\bar{\theta}) \ = \ \tfrac{1}{2}\,a^2\,\bar{\psi}_a'(\bar{\theta}),
\end{equation}%
where $\bar{\psi}_a\colon\mathbb{R}\to\mathbb{R}$ is the locally averaged steady-state output function defined by%
\begin{equation}\label{eq:15}
\bar{\psi}_a(\bar{\theta}) \ := \ \frac{1}{2\,a}\int_{-a}^{a}\psi(\bar{\theta}+\vartheta)\,\frac{4}{\pi}\,\sqrt{1-\big(\tfrac{\vartheta}{a}\big)^2}\,\mathrm{d}\vartheta.
\end{equation}%
Consequently, the averaged system \cref{eq:12} can be written as%
\begin{subequations}\label{eq:16}%
\begin{align}
\ddot{\bar{\theta}} & \ = \ - \omega_L'\,\dot{\bar{\theta}} + \tfrac{1}{2}\,K'\,\omega_L'\,a^2\,\bar{\psi}_a'(\bar{\theta}), \label{eq:16:a} \allowdisplaybreaks \\
\dot{\bar{\eta}} & \ = \ - \omega_H'\,\bar{\eta} + \omega_H'\,\bar{z}_a(\bar{\theta}). \label{eq:16:b}
\end{align}%
\end{subequations}%
One may also view $\bar{z}_a$ as a local average of $\psi$, which, however, differs from the local average $\bar{\psi}_a$.%
\begin{remark}\label{remark:02}
In the above averaging procedure, the state~$\tilde{\theta}$ in \cref{eq:11:a}--\cref{eq:11:b} may be interpreted as the position of a double integrator point mass subject to damping and periodic forcing. For sufficiently small $\epsilon>0$, the solutions of \cref{eq:11:a}--\cref{eq:11:b} approximate the solutions of \cref{eq:16:a}. One may interpret \cref{eq:16:a} again as double integrator point mass, which is driven by a ``gradient force'' due to the locally averaged objective function $\bar{\psi}_a$ defined in \cref{eq:15}. Taking the integral of $\psi$ around $\bar{\theta}$ is a convolutional operation, which naturally induces a smoothing of the original function $\psi$, where the smoothing kernel has a compact support and a symmetric shape. In \cref{eq:15}, the average of $\psi$ is taken over an integral of length $2a$ centered at the current state $\bar{\theta}$. Such a local average can ``wash out'' local extrema of $\psi$, which may be helpful for global optimization. For $a\to0$, the function $\bar{\psi}_a$ converges locally uniformly to the original steady-state output function $\psi$. Hence, if the local average in \cref{eq:15} eliminates local extrema of $\psi$, then an approximation of the solutions of \cref{eq:16:a} may be beneficial for convergence into a neighborhood of a global maximizer of the original function $\psi$.\hfill$\bullet$
\end{remark}%
Now we state the main theoretical results of this section.%
\begin{theorem}\label{theorem:01}
Suppose that \Cref{assumption:01,assumption:02} are satisfied. Suppose there exists $\bar{\theta}_a^\ast\in\mathbb{R}$ such that the double integrator \cref{eq:16:a} is GAS w.r.t.~$[\bar{\theta}_a^\ast,0]^\top$. Then the extremum seeking system \cref{eq:09} is SGPUAS w.r.t.~$[l(\bar{\theta}_a^\ast),\bar{\theta}_a^\ast,0,z(\bar{\theta}_a^\ast)]^\top$.\hfill$\bullet$
\end{theorem}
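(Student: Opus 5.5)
The proof chains two approximation steps, each justified by a known "closeness of solutions implies SGPUAS" argument: singular perturbation in $\epsilon$ and averaging in $\delta$. It rests on the fact that uniform asymptotic stability of a limit system transfers, in the semi-global practical sense, to perturbed systems whose trajectories converge uniformly on compact time intervals to those of the limit. The starting point is the averaged system \cref{eq:16}. From the hypothesis that \cref{eq:16:a} is GAS w.r.t.\ $[\bar\theta_a^\ast,0]^\top$, I would first show that the full averaged system in the variables $(\bar\theta,\bar\xi,\bar\eta)$, with $\bar\xi=\dot{\bar\theta}/K'$, is GAS w.r.t.\ $[\bar\theta_a^\ast,0,\bar z_a(\bar\theta_a^\ast)]^\top$. (I read $z(\bar\theta_a^\ast)$ in the statement as $\bar z_a(\bar\theta_a^\ast)$.) This step is a cascade argument. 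The $\bar\eta$-subsystem \cref{eq:16:b} is exponentially stable and ISS with respect to the input $\bar z_a(\bar\theta)$, which converges to $\bar z_a(\bar\theta_a^\ast)$ by continuity of $\bar z_a$, and a GAS system driving an ISS system yields GAS of the cascade. I would record a $\mathcal{KL}$ bound $\bar\beta$ for it.

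Second, I would compare the reduced system \cref{eq:11} with \cref{eq:16}. In the time scale $\tau$, the right-hand side of \cref{eq:11} is $2\pi\delta$-periodic in $\tau$, and its average is \cref{eq:12}. Identity \cref{eq:14} converts \cref{eq:12} into \cref{eq:16}; this identity should be proved by integrating by parts against $\cos$ and then substituting $\vartheta=a\sin\sigma$. Standard first-order averaging (e.g.\ via the near-identity transformation $\theta\mapsto\theta+\delta\,w(\theta,\tau/\delta)$) then gives closeness of solutions on compact time intervals, uniformly over compact sets of initial conditions. Combined with GAS of \cref{eq:16}, this yields SGPUAS of \cref{eq:11} with respect to $\delta$. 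That is, there is a $\mathcal{KL}$ bound $\bar\beta$ plus an arbitrarily small $\sigma/2$, valid for $\delta<\delta_0$. This is the standard result of Teel, Moreau and Ne\v{s}i\'c on averaging with semiglobal practical stability.

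Third, for each fixed $\delta\in(0,\delta_0)$, I would treat \cref{eq:09} as a singularly perturbed system with fast state $x$ and small parameter $\epsilon\delta$. The boundary-layer system is \cref{eq:04} with the frozen parameter $\theta=\hat\theta+a\sin(\tau/\delta)$. By \Cref{assumption:01,assumption:02}, it is UGAS w.r.t.\ $l(\theta)$ with a $\theta$-independent $\mathcal{KL}$ bound, and its quasi-steady state $l$ is smooth. Tikhonov-type results for time-varying systems then imply that, on compact sets and finite time windows, solutions of \cref{eq:09} approach those of \cref{eq:11} as $\epsilon\to0$, with $x$ tracking $l(\hat\theta+a\sin(\tau/\delta))$ after a boundary layer. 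Here I would invoke the result of Teel, Moreau and Ne\v{s}i\'c on singular perturbations with semiglobal practical stability. Stability of \cref{eq:11} then upgrades to SGPUAS of \cref{eq:09}, where $x$ ends up in a neighbourhood of radius of order $|l(\hat\theta+a\sin)-l(\bar\theta_a^\ast)|$. That quantity is not small, because of the dither, so $\sigma$ must absorb $a$-dependent oscillations. Strictly, the $x$-component converges only to within the image of $l$ on an interval of length $2a$. I would handle this by measuring the $x$-error relative to $l(\bar\theta_a^\ast)$ only up to this $a$-dependent offset, or, more carefully, by noting that the claim is as stated in the time-scale sense of \Cref{definition:01}.

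The main obstacle will be the third step: obtaining uniformity with respect to the fast time variable and with respect to $\delta$. The quasi-steady state $l(\hat\theta+a\sin(\tau/\delta))$ moves with speed $O(1/\delta)$ in $\tau$, which is only $O(\epsilon)$ relative to the fast dynamics. The ordering in which $\epsilon_0$ is chosen after $\delta$ is exactly what makes this work. I would first try to construct an explicit converse Lyapunov function from $\beta$ in \Cref{assumption:02}. I would then bound its derivative along \cref{eq:09} by the $O(\epsilon)$ drift of the quasi-steady state, and conclude by a composite Lyapunov or trajectory-based argument on compacta.
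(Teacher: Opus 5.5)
Your route is the paper's own proof. It runs as follows: GAS of the cascade \cref{eq:16}; then SGPUAS of the reduced system \cref{eq:11} via averaging in $\delta$ and a closeness-of-solutions result (the paper cites \cite[Theorem~1]{Moreau2000}); then transfer to \cref{eq:09} by a singular-perturbation argument in $\epsilon$ for each fixed $\delta$ under \Cref{assumption:01,assumption:02} (the paper cites \cite[Lemma~1]{Tan2006}). Your reading $z(\bar\theta_a^\ast)=\bar z_a(\bar\theta_a^\ast)$ matches the paper's proof. Because $\epsilon_0$ may depend on $\delta$, the converse-Lyapunov construction you plan for the last step would only reprove that lemma.

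The one point you leave open is a real defect, and the paper's one-line appeal to \cite[Lemma~1]{Tan2006} passes over it too. For fixed $a>0$, after the boundary layer you have $x(\tau)=l\big(\hat\theta(\tau)+a\sin(\tau/\delta)\big)+O(\epsilon)$ with $\hat\theta(\tau)$ close to $\bar\theta_a^\ast$. So in every dither period, $|x(\tau)-l(\bar\theta_a^\ast)|$ comes close to $\sup_{|s|\le a}|l(\bar\theta_a^\ast+s)-l(\bar\theta_a^\ast)|$, however small $\epsilon$, $\delta$ and $\sigma$ are. In the example of \Cref{sec:03:3}, where $l(\theta)=\tfrac14\theta\,[1,1]^\top$, this offset is $\tfrac{\sqrt{2}}{4}a$.

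Hence the $x$-component of the claim, with arbitrary $\sigma$, is false unless $l$ is constant on $[\bar\theta_a^\ast-a,\bar\theta_a^\ast+a]$, and no argument can close it. Your second remedy (``the time-scale sense of \Cref{definition:01}'') does not help: the rescaling $\tau=\epsilon\delta t$ leaves the amplitude of this oscillation unchanged.

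Commit to your first remedy and prove what the argument actually delivers. That is SGPUAS in the coordinates $\big(x-l(\hat\theta+a\sin(\tau/\delta)),\hat\theta,\xi,\eta\big)$ w.r.t.\ $\big(0,\bar\theta_a^\ast,0,\bar z_a(\bar\theta_a^\ast)\big)$. In the original coordinates this gives an ultimate bound on $|x-l(\bar\theta_a^\ast)|$ equal to $\sup_{|s|\le a}|l(\bar\theta_a^\ast+s)-l(\bar\theta_a^\ast)|$ plus an arbitrarily small margin.
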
%
\begin{proof}
Note that \cref{eq:16} is a cascade of a GAS system followed by an input-to-state stable system. For this reason, the averaged system \cref{eq:16} is GAS w.r.t.~$[\bar{\theta}_a^\ast,0,\bar{z}_a(\bar{\theta}_a^\ast)]^\top$. By \cite[Theorem~1]{Moreau2000}, this implies that the reduced system \cref{eq:11} is SGPUAS w.r.t.~$[\bar{\theta}_a^\ast,0,\bar{z}_a(\bar{\theta}_a^\ast)]^\top$. Now the claim follows from \Cref{assumption:01,assumption:02} as well as \cite[Lemma~1]{Tan2006}.
\end{proof}%
The following assumption provides a sufficient condition for the double integrator \cref{eq:16:a} to be GAS.%
\begin{assumption}\label{assumption:03}
There exists $\bar{\theta}_a^\ast\in\mathbb{R}$ such that, for every $\bar{\theta}\in\mathbb{R}$ with $\bar{\theta}\neq\bar{\theta}_a^\ast$, we have $\bar{\psi}_a'(\bar{\theta})(\bar{\theta}-\bar{\theta}_a^\ast)<0$. \hfill$\bullet$
\end{assumption}%
\begin{proposition}\label{proposition:01}
Suppose that \Cref{assumption:03} is satisfied with $\bar{\theta}_a^\ast\in\mathbb{R}$ as therein. Then the double integrator \cref{eq:16:a} is GAS w.r.t.~$[\bar{\theta}_a^\ast,0]^\top$.\hfill$\bullet$
\end{proposition}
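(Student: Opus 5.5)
The plan is a mechanical energy argument combined with LaSalle's invariance principle. A separate comparison argument is needed for boundedness, because \Cref{assumption:03} does not make the energy radially unbounded.

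Write \cref{eq:16:a} as the first-order system $\dot{\bar{\theta}}=v$, $\dot v=-\omega_L' v-U'(\bar{\theta})$. Here I set $c:=\tfrac12K'\omega_L'a^2>0$ and define the potential $U(\bar{\theta}):=c\,(\bar{\psi}_a(\bar{\theta}_a^\ast)-\bar{\psi}_a(\bar{\theta}))$. By \Cref{assumption:03}, $U'(\bar{\theta})=-c\,\bar{\psi}_a'(\bar{\theta})$ has the sign of $\bar{\theta}-\bar{\theta}_a^\ast$. Hence $U$ is strictly decreasing on $(-\infty,\bar{\theta}_a^\ast]$ and strictly increasing on $[\bar{\theta}_a^\ast,\infty)$, with $U(\bar{\theta}_a^\ast)=0$, so $U$ is positive definite about $\bar{\theta}_a^\ast$. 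Note that $\bar{\psi}_a$ is smooth, since $\psi$ is smooth and \cref{eq:15} is a convolution with a compactly supported kernel; solutions therefore exist and are unique as long as they remain bounded.

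\emph{Step 1 (Lyapunov stability).} Use the energy $E(\bar{\theta},v):=\tfrac12 v^2+U(\bar{\theta})$. It is positive definite w.r.t.\ $[\bar{\theta}_a^\ast,0]^\top$ and satisfies $\dot E=-\omega_L' v^2\le0$. The standard Lyapunov theorem then gives stability of the equilibrium. In particular, $E$ is non-increasing along solutions, so $|v(t)|\le\sqrt{2E(0)}$ for all forward times.

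\emph{Step 2 (boundedness of $\bar{\theta}$).} This is the main obstacle. \Cref{assumption:03} does not imply $U(\bar{\theta})\to\infty$ as $|\bar{\theta}|\to\infty$, because $\bar{\psi}_a$ may be bounded. So $E$ need not be radially unbounded, and sublevel sets of $E$ need not be compact. I would replace radial unboundedness by a sign argument.

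Consider any time interval on which $\bar{\theta}(t)>\bar{\theta}_a^\ast$. On it, $U'(\bar{\theta})>0$, so $\dot v\le-\omega_L' v$. By comparison, $v(t)\le v(t_1)e^{-\omega_L'(t-t_1)}$, and integrating gives
\begin{equation*}
\bar{\theta}(t)\le\bar{\theta}(t_1)+\frac{\max\{v(t_1),0\}}{\omega_L'}\le \max\{\bar{\theta}(0),\bar{\theta}_a^\ast\}+\frac{\sqrt{2E(0)}}{\omega_L'} .
\end{equation*}
Here $t_1$ is either $0$ or the last time at which $\bar{\theta}$ crossed $\bar{\theta}_a^\ast$, and the bound $|v(t_1)|\le\sqrt{2E(0)}$ comes from Step 1. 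A symmetric argument gives a lower bound on $\bar{\theta}$. Consequently every solution is bounded, exists for all $t\ge0$, and has a compact omega-limit set.

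\emph{Step 3 (attractivity).} By LaSalle's invariance principle, each solution converges to the largest invariant set contained in $\{v=0\}$. On that set $\dot v=0$, so $U'(\bar{\theta})=0$, which by \Cref{assumption:03} forces $\bar{\theta}=\bar{\theta}_a^\ast$. Hence every solution converges to $[\bar{\theta}_a^\ast,0]^\top$.

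Stability together with global attractivity, for a time-invariant system, yields a $\mathcal{KL}$ estimate. This is GAS in the sense noted after \Cref{definition:01}.
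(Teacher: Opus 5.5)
Your proof is correct. Steps 1 and 3 are the same as in the paper: the energy $E=\tfrac12 v^2+V$ with $\dot E=-k v^2$ (here $k=\omega_L'$) gives Lyapunov stability, and LaSalle's principle on $\{v=0\}$ gives attractivity. You also rightly note that \Cref{assumption:03} does not make $E$ radially unbounded, so boundedness needs a separate argument.

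That boundedness step is where you and the paper part ways. The paper proves \Cref{proposition:01,proposition:02} together in the $n$-dimensional form \cref{eq:38}. It gets boundedness from the auxiliary function $B$ in \cref{eq:40}. This $B$ is radially unbounded because of its positive definite quadratic part in $[x-x^\ast,v]$. Its derivative is $\dot B=-k^3|v|^2-k\,\nabla V(x)^\top(x-x^\ast)$, which is non-positive outside $C\times\mathbb{R}^n$.

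Your comparison argument is essentially the one-dimensional counterpart of this. The inequality driving it, $\dot v+\omega_L'v=-U'(\bar{\theta})\leq0$ on $\{\bar{\theta}\geq\bar{\theta}_a^\ast\}$, says that $\omega_L'\bar{\theta}+v$ is non-increasing there. Correspondingly, the paper's $B$ equals $k^2E+V+\tfrac12|k(x-x^\ast)+v|^2$, and $\frac{\mathrm{d}}{\mathrm{d}t}\big(V+\tfrac12|k(x-x^\ast)+v|^2\big)=-k\,\nabla V(x)^\top(x-x^\ast)$.

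Your version is more elementary and gives an explicit bound on $\bar{\theta}$ in terms of the initial data. It does, however, rely on the ordering of the real line: the last crossing time of $\bar{\theta}_a^\ast$ and the sign of $U'$ on each half-line. It therefore does not carry over to the planar case of \Cref{proposition:02}. The paper's $B$ covers both propositions, including the weaker item~3 of \Cref{assumption:04}, with a single computation.
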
%
For the sake of completeness, a proof of \Cref{proposition:01} is provided in the \hyperlink{appendix}{Appendix}.%
\begin{remark}\label{remark:03}
In practice, it is typically impossible to check whether \Cref{assumption:03} is satisfied, since the steady-state output function $\psi$ is analytically unknown. A suitable choice of the parameter $a$ requires knowledge about the maximum width and depth of local extrema of $\psi$. Moreover, a global maximizer $\bar{\theta}^\ast_a$ of the averaged function $\bar{\psi}_a$ is not necessarily a global maximizer $\theta^\ast$ of the original function $\psi$; see \Cref{sec:03:3} for an example. However, if \Cref{assumption:03} is satisfied with $\bar{\theta}_a^\ast$ as therein, then $\bar{\theta}^\ast_a\to\theta^\ast$ as $a\to0$ since $\bar{\psi}^\ast_a$ converges locally uniformly to $\psi$ as $a\to0$.\hfill$\bullet$
\end{remark}%

%
%

\subsection{Simulation example}\label{sec:03:3}
We apply the extremum seeking scheme \cref{eq:06} to the following problem from \cite{Tan2009}. The single-input single-output system \cref{eq:03} is assumed to take the form%
\begin{align}\label{eq:17}
& \dot{x}_1 \ = \ - x_1 + x_2, \qquad \dot{x}_2 \ = \ x_2 + u, \allowdisplaybreaks \\
& y = - (x_1+3x_2)^4 + \tfrac{8}{15}(x_1+3x_2)^3 + \tfrac{6}{5}(x_1+3x_2)^2 + 10.\nonumber
\end{align}%
The stabilizing feedback law $u = - x_1 - 4\,x_2 + \theta$ ensures that \Cref{assumption:01,assumption:02} are satisfied with $l\colon\mathbb{R}\to\mathbb{R}^2$ given by $l(\theta)=\frac{1}{4}\,\theta\,[1,1]^\top$. Hence, the steady-state output function $\psi$ in \cref{eq:10} is given by%
\begin{equation}\label{eq:18}
\psi(\theta) \ = \ - \theta^4 + \tfrac{8}{15}\,\theta^3 + \tfrac{6}{5}\,\theta^2 + 10.
\end{equation}%
The function $\psi$ attains a global maximum at $\theta=+1$ and a local maximum at $\theta=-0.6$. The locally averaged steady-state output function $\bar{\psi}_a$ in \cref{eq:15} is explicitly given by%
\begin{equation}\label{eq:19}
\begin{split}
\bar{\psi}_a(\theta) & \ = \ - \theta^4 + \tfrac{8}{15}\,\theta^3 + \big(\tfrac{6}{5} - \tfrac{3}{2}\,a^2\big)\,\theta^2 \\
& \qquad + \tfrac{2}{5}\,a^2\,\theta + 10 + \tfrac{3}{10}\,a^2 - \tfrac{1}{8}\,a^2.
\end{split}
\end{equation}%
We study the behavior of \cref{eq:06} for the constants $\omega_H'=\omega_L'=K'=1$, the parameters $\delta=1$, $\varepsilon=\frac{1}{100}$, and the initial values $x_1(0)=x_2(0)=0$, $\hat{\theta}(0)=-1$, $\xi(0)=0$, $\eta(0)=y(0)$. Plots of $\bar{\psi}_a$ for $a=0.4$ and $a=0.7$ are shown in \Cref{figure:02}. One can see that the local maximizer of $\psi$ at $\theta=-0.6$ is not fully ``washed out'' in the local average $\bar{\psi}_{0.4}$ but persists at $\theta=-0.5$. It is therefore not surprising that the extremum seeking scheme is stuck at this local maximizer. For $a=0.7$, the local maximizer of $\psi$ is fully eliminated (\Cref{assumption:03} is satisfied) and the extremum seeking scheme converges into a neighborhood of the global maximizer of $\bar{\psi}_{0.7}$ at $\theta=0.8$. This illustrates how a local average of $\psi$ can help to overcome local extrema. On the other hand, a persistently large amplitude $a>0$ is not desirable in practical applications. The choice of the parameter~$a$ represents a trade-off between the desirable feature of overcoming local extrema and the undesirable feature of a large ultimate error due to persistent oscillations. For this reason, it makes sense to implement the method with an initially large and then slowly decreasing amplitude. Such an approach is also proposed in \cite{Tan2009} for a similar extremum seeking scheme, but without the interpretation in terms of a locally averaged steady-state output function. For suitably chosen amplitude dynamics, the extremum seeking scheme converges to the global maximizer of $\psi=\lim_{a\downarrow0}\bar{\psi}_{a}$, as shown in \Cref{figure:03}.%
\begin{figure}%
\centering\includegraphics[scale=1.05]{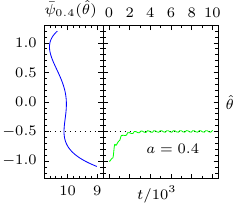}\includegraphics[scale=1.05]{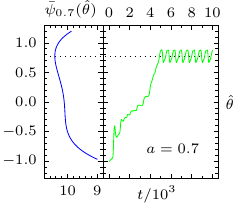}%
\caption{Plot of $\bar{\psi}_a$ (blue) in \cref{eq:19} and a trajectory of the state component $\hat{\theta}$ (green) in \cref{eq:06:b} for $a=0.4$ (left) and $a=0.7$ (right).}%
\label{figure:02}%
\end{figure}%
\begin{figure}%
\centering\includegraphics[scale=1.05]{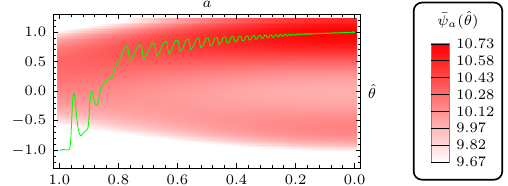}%
\caption{Plot of the averaged steady-state output function $\bar{\psi}_a$ in \cref{eq:19}. The trajectory in green represents the state component $\hat{\theta}$ in \cref{eq:06:b} with constant $a>0$ replaced by the amplitude dynamics $\dot{a}(t)=-\epsilon^2\,a(t)$, $a(0)=1$.}%
\label{figure:03}%
\end{figure}%

%
%
%
%

\section{Global source seeking with a two-dimensional double integrator}\label{sec:04}
In this section, we investigate the problem of global source seeking with a two-dimensional double integrator point mass.

%
%

\subsection{Problem statement and control law}\label{sec:04:1}
Consider a two-dimensional double integrator point%
\begin{equation}\label{eq:20}
m\,\ddot{q} \ = \ -\kappa\,\dot{q} + f
\end{equation}%
of mass $m>0$ with unknown linear damping constant $\kappa>0$, unknown position $q\in\mathbb{R}^2$, and unknown velocity $\dot{q}\in\mathbb{R}^2$. The applied control force is denoted by $f\in\mathbb{R}^2$. Let $\psi\colon\mathbb{R}^2\to\mathbb{R}$ be an analytically unknown smooth function, which describes an unknown signal in $\mathbb{R}^2$. It is assumed that the double integrator point can measure the value of $\psi$ at its current position $q$. The task of the double integrator is to locate a point where the signal attains its global maximum value. This point of maximum strength is called the \emph{source} of the signal. The signal may have other local extrema. 

To overcome local extrema, we now propose a method that provides access to the gradient of a locally averaged objective function $\bar{\psi}_M$, which is given by the local average of the unknown signal function $\psi$ over a user-prescribed submanifold $M$. To this end, let $M$ be a 2-dimensional compact smooth \begin{wrapfigure}{r}{2.7cm}
\includegraphics[scale=1.05]{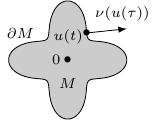}
\end{wrapfigure} submanifold of $\mathbb{R}^2$ with connected boundary $\partial{M}$. Let $\nu$ be the outward-pointing unit normal vector field along $\partial{M}$. Let $T$ be a positive real number. Assume that $u\colon\mathbb{R}\to\partial{M}$ is a smooth, zero-mean, and $T$-periodic curve in $\partial{M}$ with nowhere vanishing derivative such that the restriction of $u$ to the open interval $(0,T)$ is injective. Thus, the restriction of $u$ to $(0,T)$ is a global parametrization of $\partial{M}$ up to the point $u(0)=u(T)$. Let $A(M)$ be the area of $M$ and let $c$ and $\omega_H$ be arbitrary positive constants. For sufficiently small parameter $\epsilon>0$, we propose the control force%
\begin{equation}\label{eq:21}
f \ = \ m\,\tfrac{1}{\epsilon^2}\,\ddot{u}\big(\tfrac{1}{\epsilon}t\big) + \tfrac{c\,T}{A(M)}\,(y-\eta)\,\big|\dot{u}\big(\tfrac{1}{\epsilon}t\big)\big|\,\nu\big(u\big(\tfrac{1}{\epsilon}t\big)\big),
\end{equation}%
where $y=\psi(q)$ is the measured value of $\psi$ at $q$ and $\eta$ is the state of the high-pass filter%
\begin{equation}\label{eq:22}
\dot{\eta} \ = \ - \omega_H\,\eta + \omega_H\,y
\end{equation}%
with filter input $y$ and filter output $y-\eta$.%
\begin{remark}\label{remark:04}
If the mass $m$ of the double integrator point is unknown, then, instead of \cref{eq:21}, one can apply the force%
\begin{equation}\label{eq:23}
f \ = \ \mu\,\tfrac{1}{\epsilon^2}\,\ddot{u}\big(\tfrac{1}{\epsilon}t\big) + \tfrac{c\,T}{A(M)}\,(y-\eta)\,\big|\dot{u}\big(\tfrac{1}{\epsilon}t\big)\big|\,\nu\big(u\big(\tfrac{1}{\epsilon}t\big)\big)
\end{equation}%
with an arbitrary positive constant $\mu$ in place of $m$. In this case, one can simply re-scale $\tilde{u}(\tau):=\frac{\mu}{m}\,u(\tau)$, $\tilde{c}:=\frac{\mu}{m}\,c$, $\tilde{M}:=\frac{\mu}{m}M$, and $\tilde{\nu}(\tilde{p}):=\nu(\frac{m}{\mu}\,\tilde{p})$. Then \cref{eq:23} can be written again of the form \cref{eq:21}; namely%
\begin{equation}\label{eq:24}
f \ = \ m\,\tfrac{1}{\epsilon^2}\,\ddot{\tilde{u}}\big(\tfrac{1}{\epsilon}t\big) + \tfrac{c\,T}{A(\tilde{M})}\,(y-\eta)\,\big|\dot{\tilde{u}}\big(\tfrac{1}{\epsilon}t\big)\big|\,\tilde{\nu}\big(\tilde{u}\big(\tfrac{1}{\epsilon}t\big)\big).
\end{equation}%
Hence, the subsequent considerations also apply to \cref{eq:23} but with the above re-scaling.\hfill$\bullet$
\end{remark}%

%
%

\subsection{Analysis and theoretical results}\label{sec:04:2}
The double integrator point \cref{eq:20} under control law \cref{eq:21}, \cref{eq:22} is described by the equations%
\begin{subequations}\label{eq:25}%
\begin{align}
m\,\ddot{q} & \ = \ - \kappa\,\dot{q} + m\,\tfrac{1}{\epsilon^2}\,\ddot{u}\big(\tfrac{1}{\epsilon}t\big) \label{eq:25:a} \allowdisplaybreaks \\
& \qquad + \tfrac{c\,T}{A(M)}\,(\psi(q)-\eta)\,\big|\dot{u}\big(\tfrac{1}{\epsilon}t\big)\big|\,\nu\big(u\big(\tfrac{1}{\epsilon}t\big)\big), \label{eq:25:b}\allowdisplaybreaks \\
\dot{\eta} & \ = \ - \omega_H\,\eta + \omega_H\,\psi(q). \label{eq:25:c}
\end{align}%
\end{subequations}%
Let $U\colon\mathbb{R}\to\mathbb{R}^2$ be the zero-mean anti-derivative of $u$. We perform the change of variables%
\begin{equation}\label{eq:26}
\tilde{q} \ = \ q - u\big(\tfrac{1}{\epsilon}t\big) + \epsilon\,\tfrac{\kappa}{m}\,U\big(\tfrac{1}{\epsilon}t\big), \qquad \tilde{\eta} \ = \ \eta.
\end{equation}%
In the variables \cref{eq:26}, the closed-loop system \cref{eq:25} reads%
\begin{subequations}\label{eq:27}%
\begin{align}
\!\!m\,\ddot{\tilde{q}} & \ = \ -\kappa\,\dot{\tilde{q}} \label{eq:27:a} \allowdisplaybreaks \\
& \!\!\!\!\!\!\!\!\!\!\! + \tfrac{cT}{A(M)}\psi\big(\tilde{q} + u\big(\tfrac{1}{\epsilon}t\big) - \epsilon\tfrac{\kappa}{m}U\big(\tfrac{1}{\epsilon}t\big)\big)\big|\dot{u}\big(\tfrac{1}{\epsilon}t\big)\big|\nu\big(u\big(\tfrac{1}{\epsilon}t\big)\big) \label{eq:27:b} \allowdisplaybreaks \\
& - \tfrac{c\,T}{A(M)}\,\eta\,\big|\dot{u}\big(\tfrac{1}{\epsilon}t\big)\big|\,\nu\big(u\big(\tfrac{1}{\epsilon}t\big)\big) + \tfrac{\kappa^2}{m}\,u\big(\tfrac{1}{\epsilon}t\big), \label{eq:27:c} \allowdisplaybreaks \\
\dot{\tilde{\eta}} & \ = \ - \omega_H\,\tilde{\eta} + \omega_H\,\psi\big(\tilde{q} + u\big(\tfrac{1}{\epsilon}t\big) - \epsilon\,\tfrac{\kappa}{m}\,U\big(\tfrac{1}{\epsilon}t\big)\big). \label{eq:27:d}
\end{align}%
\end{subequations}%
In the limit $\epsilon\to0$, one can relate \cref{eq:27} to an averaged system. The term in \cref{eq:27:b} provides the averaged contribution%
\begin{equation}\label{eq:28}
\bar{G}_M(\bar{q}) \ := \ \frac{c\,T}{A(M)}\,\frac{1}{T}\int_0^T\psi\big(\bar{q} + u(\tau)\big)\,\big|\dot{u}(\tau)\big|\,\nu(u(\tau))\,\mathrm{d}\tau.
\end{equation}%
Because $u$ is a parametrization of $\partial{M}$, we have%
\begin{equation}\label{eq:29}
\bar{G}_M(\bar{q}) \ = \ \frac{c}{A(M)}\int_{\partial{M}}\psi\big(\bar{q} + p\big)\,\nu(p)\,\mathrm{d}\lambda_{\partial{M}}(p),
\end{equation}%
where $\lambda_{\partial{M}}$ denotes the Lebesgue measure on $M$. Using the divergence theorem, we obtain%
\begin{equation}\label{eq:30}
\bar{G}_M(\bar{q}) \ = \ c\,\nabla\bar{\psi}_M(\bar{q}),
\end{equation}%
where $\bar{\psi}_M\colon\mathbb{R}^2\to\mathbb{R}$ is the locally averaged signal function defined by%
\begin{equation}\label{eq:31}
\bar{\psi}_M(q) \ := \ \frac{1}{A(M)}\int_{M}\psi(q+p)\,\mathrm{d}p.
\end{equation}%
In the limit $\epsilon\to0$, the terms in \cref{eq:27:c} do not contribute to the averaged system while the last term in \cref{eq:27:d} provides%
\begin{equation}\label{eq:32}
\bar{z}_M(\bar{q}) \ := \ \frac{1}{T}\int_0^T\psi(\bar{q} + u(\tau))\,\mathrm{d}\tau.
\end{equation}%
This indicates that the averaged system for \cref{eq:27} reads%
\begin{subequations}\label{eq:33}%
\begin{align}
m\,\ddot{\bar{q}} & \ = \ - \kappa\,\dot{\bar{q}} + c\,\nabla\bar{\psi}_M(\bar{q}), \label{eq:33:a} \allowdisplaybreaks \\
\dot{\bar{\eta}} & \ = \ - \omega_H\,\bar{\eta} + \omega_H\,\bar{z}_M(\bar{q}). \label{eq:33:b}
\end{align}%
\end{subequations}%
\begin{remark}\label{remark:05}
In \cref{eq:31}, the local average of $\psi$ is taken over the manifold $M$, which is enclosed by the dither signal $u$. As already observed in the one-dimensional setting of \Cref{sec:03}, such a local average of $\psi$ can ``wash out'' local extrema, which may be helpful for global optimization. The double integrator point mass in \cref{eq:33:a} is driven into the gradient direction of the locally averaged signal function $\bar{\psi}_M$. It is reasonable to expect that $\bar{\psi}_M$ has fewer local minima than the original signal function $\psi$. An explicit example can be found in \Cref{sec:04:3}.
\end{remark}%
\begin{example}\label{example:01}
Suppose that $M$ is the closed disk $a\bar{\mathbb{D}}$ of radius $a>0$ centered at the origin. Then, the boundary of $a\bar{\mathbb{D}}$ is parametrized by $u=a[\cos, \sin]^\top$ and \cref{eq:21} reads%
\begin{equation}\label{eq:34}
f \ = \ a\,\big( - m\,\tfrac{1}{\epsilon^2} + \tfrac{2\,c}{a^2}\,(y-\eta)\big) \begin{bmatrix} \cos(t/\epsilon) \\ \sin(t/\epsilon) \end{bmatrix}.
\end{equation}%
In this case, the averaged signal function $\bar{\psi}_a\colon\mathbb{R}^2\to\mathbb{R}$ with radius $a>0$ is given by%
\begin{equation}\label{eq:35}
\bar{\psi}_a(q) \ := \ \bar{\psi}_{a\bar{\mathbb{D}}}(q) \ = \ \frac{1}{\pi\,a^2}\int_{a\bar{\mathbb{D}}}\psi(q+p)\,\mathrm{d}p.
\end{equation}%
Note that $\bar{\psi}_a$ converges locally uniformly to the original signal function $\psi$ as $a\to0$. Consequently, the smaller $a$, the closer $\bar{\psi}_a$ is to $\psi$. The control force in \cref{eq:34} is similar to the one in \cite{Zhang20072} for source-seeking vehicles. However, the theoretical analysis in \cite{Zhang20072} is limited to local stability properties for a quadratic signal function and without the interpretation in terms of a locally averaged signal. In \cite{Suttner20241}, \cite{Suttner20243}, \cite{Suttner20261}, source seeking methods for unicycles and single integrators are proposed, which allow an approximation of a kinematic gradient system of the form $\dot{\bar{q}}=c\,\nabla\bar{\psi}_a(\bar{q})$ with the same function $\bar{\psi}_a$ as in \cref{eq:35}. The results in the present paper show that the approach from \cite{Suttner20241}, \cite{Suttner20243}, \cite{Suttner20261} is not limited to a kinematic setting but can be extended to a dynamic (\emph{heavy ball}) gradient system of the form \cref{eq:33:a}. A heavy ball-like method may be beneficial for global optimization, since momentum can help to pass through local extrema. Moreover, the proposed method~\cref{eq:21} is not limited to local averages over disks as in \cite{Suttner20241}, \cite{Suttner20243}, \cite{Suttner20261}, but allows averaging over a user-prescribed manifold $M$.  \hfill$\bullet$
\end{example}%
Now we state the main theoretical results of this section.%
\begin{theorem}\label{theorem:02}
Suppose there exists $\bar{q}_M^\ast\in\mathbb{R}^2$ such that the double integrator \cref{eq:33:a} is GAS w.r.t.~$[\bar{q}_M^\ast,0]^\top$. Then the coordinate-transformed source-seeking system \cref{eq:27} is SGPUAS w.r.t.~$[\bar{q}_M^\ast,0,\bar{z}_M(\bar{q}_M^\ast)]^\top$.\hfill$\bullet$
\end{theorem}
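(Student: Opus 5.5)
The plan mirrors the proof of \Cref{theorem:01}: establish that the averaged system \cref{eq:33} is GAS, and then transfer this property to \cref{eq:27} through a first-order averaging result for SGPUAS. There is no singular perturbation layer here, so \cite[Lemma~1]{Tan2006} is not needed.

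\textbf{Step 1: GAS of the averaged system.} Write \cref{eq:33} as the cascade of \cref{eq:33:a} with state $[\bar q,\dot{\bar q}]^\top$, followed by the scalar filter \cref{eq:33:b}. The filter is input-to-state stable with respect to the input $\bar z_M(\bar q)$, and $\bar z_M$ is smooth. Since \cref{eq:33:a} is GAS w.r.t.\ $[\bar q_M^\ast,0]^\top$ by hypothesis, the standard cascade argument gives GAS of \cref{eq:33} w.r.t.\ $[\bar q_M^\ast,0,\bar z_M(\bar q_M^\ast)]^\top$.

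\textbf{Step 2: first-order form.} Put \cref{eq:27} in the form \cref{eq:01} with state $x=[\tilde q,\dot{\tilde q},\tilde\eta]^\top$ and fast time $t/\epsilon$. The parameter $\delta$ plays no role here; one can let $\delta$ be a dummy parameter, or choose $\delta_0$ arbitrary. The right-hand side is then $g(x,t/\epsilon,\epsilon)$, which is $T$-periodic in its second argument, smooth, and depends smoothly on $\epsilon$ up to $\epsilon=0$. This regularity holds because $u$ and $U$ are smooth and periodic and $\psi$ is smooth. We may therefore write
\[
g(x,s,\epsilon)=g_0(x,s)+\epsilon\, r(x,s,\epsilon),
\]
where $r$ is bounded on compact sets uniformly in $s$ and in small $\epsilon$. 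The time average of $g_0$ over one period equals the right-hand side of \cref{eq:33}. To see this, note that the average of the \cref{eq:27:b} term is $\bar G_M=c\nabla\bar\psi_M$ by \cref{eq:28}--\cref{eq:30}. Moreover, $\tfrac{\kappa^2}{m}u$ has zero mean because $u$ is zero-mean. Finally, the term $\eta|\dot u|\nu(u)$ averages to $\frac{\eta}{T}\int_{\partial M}\nu\,\mathrm d\lambda_{\partial M}=0$, by the divergence theorem applied to constant fields.

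\textbf{Step 3: averaging.} Apply \cite[Theorem~1]{Moreau2000}, as in the proof of \Cref{theorem:01}. For a system with fast periodic time dependence and a GAS averaged system, it yields SGPUAS of the original system, and the $O(\epsilon)$ perturbation $\epsilon r$ is admissible within that theorem's hypotheses. This gives SGPUAS of \cref{eq:27} w.r.t.\ $[\bar q_M^\ast,0,\bar z_M(\bar q_M^\ast)]^\top$.

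\textbf{Expected obstacle.} The main point to check is the averaging step. It requires that the hypotheses of \cite{Moreau2000} apply to a system whose right-hand side depends on $\epsilon$ not only through $t/\epsilon$ but also through the explicit $\epsilon\frac{\kappa}{m}U$ shift inside $\psi$. My first attempt would be a Lipschitz estimate on compact sets showing that this shift contributes an $O(\epsilon)$ perturbation, uniformly in time, to a system that is otherwise of the form $g_0(x,t/\epsilon)$. I would then invoke the robustness of SGPUAS with respect to vanishing uniformly bounded perturbations, which follows from the uniform $\mathcal{KL}$-type bound of the averaged GAS system on compacts. Alternatively, one can treat $\epsilon$ as an additional parameter in the averaging theorem, if its formulation allows continuous dependence on $\epsilon$.
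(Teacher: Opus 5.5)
Your proposal is correct and follows the paper's route. The paper argues in three steps: a cascade argument gives GAS of \cref{eq:33}, solutions of \cref{eq:27} stay close to those of \cref{eq:33} on compact time intervals and compact sets, and \cite[Theorem~1]{Moreau2000} then yields SGPUAS. The one difference is how closeness of solutions is justified: the paper checks that \cref{eq:33} is the weak average of \cref{eq:27} and invokes \cite[Theorem~1]{Teel20001}, whereas you treat the explicit $\epsilon\frac{\kappa}{m}U$ shift as an $O(\epsilon)$ remainder and combine classical periodic averaging with a Gronwall-type estimate. You also verify explicitly that the terms in \cref{eq:27:c} average out, via $\int_{\partial M}\nu\,\mathrm{d}\lambda_{\partial M}=0$ and the zero mean of $u$, which the paper only asserts.
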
%
\begin{proof}
Note that \cref{eq:33} is a cascade of the GAS system \cref{eq:33:a} followed by the input-to-state stable system \cref{eq:33:b}. For this reason, the averaged system \cref{eq:33} is GAS w.r.t.~$[\bar{\theta}_M^\ast,0,\bar{z}_M(\bar{\theta}_M^\ast)]^\top$. In the terminology of \cite[Definition~1]{Teel20001}, it is straight-forward to check that system~\cref{eq:33} is the ``weak average'' of \cref{eq:27}. Using \cite[Theorem~1]{Teel20001}, it follows that the solutions of \cref{eq:27} approximate the solutions of \cref{eq:33} on arbitrary large compact time intervals and arbitrary large compact subsets of the state space. By \cite[Theorem~1]{Moreau2000}, the fact that \cref{eq:33} is GAS w.r.t.~$[\bar{\theta}_M^\ast,0,\bar{z}_M(\bar{\theta}_M^\ast)]^\top$ implies that \cref{eq:27} is SGPUAS w.r.t.~$[\bar{q}_M^\ast,0,\bar{z}_M(\bar{q}_M^\ast)]^\top$.
\end{proof}%
A simple sufficient condition for the double integrator \cref{eq:33:a} to be GAS w.r.t.~$[\bar{q}_M^\ast,0]^\top$ is that $\bar{\psi}_M$ has no other critical point than $\bar{q}_M^\ast$ and that $\bar{\psi}_M(q)\to-\infty$ whenever $|q|\to\infty$. A radially unbounded signal is, however, rather unrealistic in the context of source seeking. It is more likely, that the strength of the signal attains its maximum value at the unknown source and that the signal strength tends to zero with increasing distance from the source; for instance, an exponentially decaying signal with spatial inhomogeneities as in \Cref{figure:04}. The subsequent \Cref{assumption:04} contains more realistic conditions for the purpose of source seeking.%
\begin{figure*}%
\centering$\begin{matrix}\includegraphics[scale=1.05]{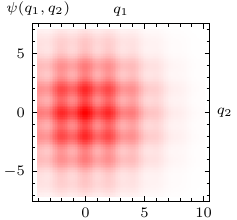}\end{matrix}\qquad\begin{matrix}\includegraphics[scale=1.05]{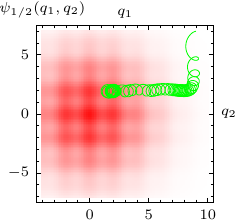}\end{matrix}\qquad\begin{matrix}\includegraphics[scale=1.05]{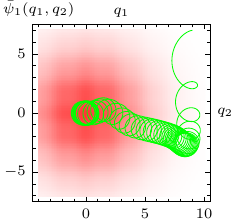}\end{matrix}\qquad\begin{matrix}\includegraphics[scale=1.05]{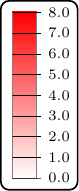}\end{matrix}$%
\caption{Left: Plot of the signal $\psi=\lim_{a\downarrow0}\bar{\psi}_a$ given by \cref{eq:36}. Center and Right: Plot of the averaged signal function $\bar{\psi}_a$ (red) given by \cref{eq:35} and a trajectory of the double integrator point (green) given by \cref{eq:25:b}, \cref{eq:25:c} for $a=1/2$ (center) and $a=1$ (right).}%
\label{figure:04}%
\end{figure*}%
\begin{assumption}\label{assumption:04}
There exists $\bar{q}_M^\ast\in\mathbb{R}^2$ such that%
\begin{enumerate}
	\item $\bar{\psi}_M(\bar{q})<\bar{\psi}_M(\bar{q}_M^\ast)$ for every $\bar{q}\in\mathbb{R}^2$ with $\bar{q}\neq\bar{q}_M^\ast$;
	\item $\nabla\bar{\psi}_M(\bar{q})\neq0$ for every $\bar{q}\in\mathbb{R}^2$ with $\bar{q}\neq\bar{q}_M^\ast$;
	\item $\nabla\bar{\psi}_M(\bar{q})^\top(\bar{q}-\bar{q}_M^\ast)\leq0$ for every $\bar{q}\in\mathbb{R}^2$ with $\bar{q}\notin{C}$, where $C$ is some compact subset of $\mathbb{R}^2$. \hfill$\bullet$
\end{enumerate}%
\end{assumption}%
\begin{proposition}\label{proposition:02}
Suppose that \Cref{assumption:04} is satisfied with $\bar{q}_M^\ast\in\mathbb{R}^2$ as therein. Then the double integrator \cref{eq:33:a} is GAS w.r.t.~$[\bar{q}_M^\ast,0]^\top$.\hfill$\bullet$
\end{proposition}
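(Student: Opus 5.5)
We prove \Cref{proposition:02} with the mechanical energy as a Lyapunov function and LaSalle's invariance principle, and obtain boundedness of trajectories from item~3 of \Cref{assumption:04}. Write $v=\dot{\bar{q}}$ and
\begin{equation*}
E(\bar{q},v) \ := \ \tfrac{m}{2}|v|^2 + c\,\big(\bar{\psi}_M(\bar{q}_M^\ast)-\bar{\psi}_M(\bar{q})\big).
\end{equation*}
By item~1, $E$ is positive definite with respect to $[\bar{q}_M^\ast,0]^\top$. Along solutions of \cref{eq:33:a} we have $\dot{E}=-\kappa|v|^2\leq0$. In general $E$ is not radially unbounded, since $\bar{\psi}_M$ may tend to a finite limit at infinity. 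So LaSalle cannot be applied directly, and boundedness of solutions has to be shown separately.

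\textbf{Boundedness of $\bar{q}$ (main obstacle).} Since $E$ is non-increasing, $|v(t)|$ stays bounded on the maximal interval of existence; the only issue is the position. Set $e=\bar{q}-\bar{q}_M^\ast$ and consider $W(t)=\tfrac{m}{2}|e|^2$. Then
\begin{equation*}
\ddot{W} \ = \ m|v|^2 - \kappa\, e^\top v + c\,\nabla\bar{\psi}_M(\bar{q})^\top e.
\end{equation*}
By item~3 the last term is $\leq0$ outside $C$, but the other terms need not be small, so I would use a cross-term Lyapunov function instead. Define
\begin{equation*}
V \ = \ E + \gamma\big(\tfrac{\kappa}{2}|e|^2 + m\,e^\top v\big)
\end{equation*}
for small $\gamma>0$. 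Its derivative is
\begin{equation*}
\dot V \ = \ -\kappa|v|^2 + \gamma\big(m|v|^2 + c\,\nabla\bar{\psi}_M^\top e\big) \ \leq \ -(\kappa-\gamma m)|v|^2
\end{equation*}
outside $C$, using $\kappa\, e^\top v + m e^\top\dot v = e^\top(\kappa v - \kappa v + c\nabla\bar{\psi}_M)$. Inside the compact set $C$, the term $\gamma c\nabla\bar{\psi}_M^\top e$ is bounded by a constant $\gamma K$. Since $\tfrac{\kappa}{2}|e|^2+m e^\top v\geq \tfrac{\kappa}{4}|e|^2 - \tfrac{m^2}{\kappa}|v|^2$ and $E\geq \tfrac{m}{2}|v|^2$, for small $\gamma$ the function $V$ is bounded below by $\alpha(|e|^2+|v|^2)-$const and is therefore radially unbounded. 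It then remains to show that $V$ cannot grow without bound. The increase of $V$ only happens while $\bar{q}\in C$, at rate at most $\gamma K$, and I would bound the total time spent in $C$.

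Bounding the time in $C$ is the delicate point. My first attempt is to argue by contradiction. Suppose $|\bar{q}(t)|\to\infty$ along a sequence of times. Then $\bar{q}$ must exit a large ball containing $C$ infinitely often or escape for good. During each excursion outside $C$, $V$ is non-increasing. Between excursions the trajectory crosses from the boundary of $C$ to far away with bounded speed, and each such crossing consumes a fixed amount of energy $E$, because $\int|v|^2\,dt\geq(\text{distance})^2/\text{time}$. But $E$ is bounded below and non-increasing, so $\int_0^\infty|v|^2\,dt<\infty$. Hence only finitely many long crossings can occur, and after the last one the trajectory either stays in a bounded set or stays outside $C$ forever, where $V$ decreases. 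Either way it is bounded. If this cleanup fails, a fallback is to use the finiteness of $\int|v|^2$ directly: $V$ only increases while in $C$, and the time spent in $C$ is controlled together with the decay of $v$.

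\textbf{Convergence.} Given bounded forward solutions, their $\omega$-limit sets are nonempty, compact, and invariant, and are contained in $\{v=0\}$ because $\dot E=-\kappa|v|^2$ and $E$ is bounded below. Invariance forces $\dot v=0$, hence $\nabla\bar{\psi}_M(\bar{q})=0$, and by item~2 this gives $\bar{q}=\bar{q}_M^\ast$. So every solution converges to $[\bar{q}_M^\ast,0]^\top$. Lyapunov stability follows from $E$ being positive definite with $\dot E\leq0$; to get sublevel sets that are compact near the equilibrium, one uses item~2 and the isolation of the strict maximum from item~1. Together these give GAS w.r.t.~$[\bar{q}_M^\ast,0]^\top$.
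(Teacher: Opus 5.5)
Your architecture matches the paper's proof. You use the energy $E$ for Lyapunov stability and LaSalle, and an energy-plus-cross-term function for boundedness. In fact, the paper's $B$ in \cref{eq:40} is, up to a positive factor, your $V$ with $\gamma=k/(1+k^2)$, $k=\kappa/m$. Your derivative computations and the LaSalle/convergence part are fine.

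The gap is in the boundedness step, which you correctly flag as the crux but do not close.
\begin{itemize}
\item Bounding the total time spent in $C$ cannot work. Item~3 stays true if $C$ is enlarged, so your argument must cover a $C$ containing a neighborhood of $\bar q_M^\ast$. For such a $C$, every solution (which, by the very claim, converges to $\bar q_M^\ast$) spends infinite time in $C$. An estimate of the form ``increase of $V\le\gamma K\cdot(\text{time in }C)$'' is then useless.
\item The crossing argument also fails. The bound $\int|v|^2\,dt\geq d^2/T$ gives a fixed energy cost per crossing only if the crossing time $T$ is bounded above. A speed bound only bounds $T$ from below. So ``only finitely many long crossings'' does not follow from $\int_0^\infty|v|^2\,dt<\infty$.
\item The fallback sentence is not an argument.
\end{itemize}

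What is missing is a bound on the \emph{state} while in $C$, not on the time. You already have $|v(t)|^2\le 2E(0)/m=:\bar v^2$. Alternatively, note that $\dot V>0$ forces $\bar q\in C$ and $(\kappa-\gamma m)|v|^2<\gamma K$; the paper uses this boundedness of $\{\dot B>0\}$ implicitly after \cref{eq:41}. Set
$b:=\max\{V(\bar q,v):\bar q\in C,\ |v|\le\bar v\}$. For any $t$ in the maximal interval, there are two cases.
\begin{itemize}
\item If $\bar q(t)\in C$, then $V(t)\le b$.
\item Otherwise, let $s$ be the last time in $[0,t]$ with $\bar q(s)\in C$; it exists because that set of times is closed, if nonempty. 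On $(s,t]$ the trajectory stays outside $C$, where $\dot V\le 0$, so $V(t)\le V(s)\le b$. If no such $s$ exists, then $V(t)\le V(0)$.
\end{itemize}
Hence $V(t)\le\max\{V(0),b\}$. Radial unboundedness of $V$ then bounds $(\bar q,v)$, which gives forward completeness and the precompactness needed for LaSalle. With this replacement your proof is complete.

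A minor point: for Lyapunov stability, item~1 and continuity of $E$ suffice, since $E$ has a positive minimum on small spheres around $[\bar q_M^\ast,0]^\top$; item~2 is only needed in the LaSalle step.
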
%
For the sake of completeness, a proof of \Cref{proposition:02} is provided in the \hyperlink{appendix}{Appendix}.%

%
%

\subsection{Simulation example}\label{sec:04:3}
As an example of global source seeking in the presence of local extrema, suppose that the unknown signal $\psi\colon\mathbb{R}^2\to\mathbb{R}$ is given by%
\begin{equation}\label{eq:36}
\psi(q) \ = \ \big(6 + \cos(3\,q_1) + \cos(3\,q_2)\big)\,\mathrm{e}^{-(|q|/5)^2}.
\end{equation}%
This signal function is shown in the left plot of \Cref{figure:04}. The global maximizer of $\psi$ is at the origin. But there are also infinitely many other local extrema. We apply the proposed control force \cref{eq:21}, where the dither signal $u$ is chosen as in \Cref{example:01}. Then, the averaged signal function $\bar{\psi}_a$ is given by \cref{eq:35}. The closed-loop system \cref{eq:25} is simulated for the constants $m=\kappa=c=\omega_H=1$, parameter $\epsilon=\frac{1}{10}$, and initial values $q_1(0)=-9$, $q_2(0)=7$, $\eta(0)=0$. Plots of the averaged signal function $\bar{\psi}_a$ for $a=1/2$ and $a=1$ are shown in the center and right plot of \Cref{figure:04}, respectively. One can see in the center plot of \Cref{figure:04} that the double integrator point gets stuck at a local maximum of $\bar{\psi}_{1/2}$, which is not the global maximum at the origin. For $a=1$, the radius of averaging is large enough to eliminate all non-global extrema of $\psi$. One can check (at least numerically) that \Cref{assumption:04} is satisfied for $a=1$. The right plot of \Cref{figure:04} shows that the double integrator tends to a circular motion around the global maximizer at the origin.

In the center and right plot of \Cref{figure:04}, one can observe an initial drift of the double integrator into the direction $a[1,-10]^\top$. This drift can be explained by the change of variables \cref{eq:26}. A zero initial velocity $\dot{q}(0)=[0,0]^\top$ of the point mass in \cref{eq:25} implies the nonzero initial velocity%
\begin{equation}\label{eq:37}
\dot{\tilde{q}}(0) \ = \ \dot{q}(0) - \tfrac{1}{\varepsilon}\,\dot{u}(0) + \tfrac{\kappa}{m}\,u(0) \ = \ a\left[\begin{smallmatrix} 1 \\ -10 \end{smallmatrix}\right]
\end{equation}%
in the transformed system \cref{eq:27}, which approximates \cref{eq:33}.%

%
%
%
%

\section{Conclusions and outlook}\label{sec:05}
We have investigated semi-global stability properties of extremum seeking systems in the presence of local extrema. The basic idea is to get access to a locally averaged objective function. We have seen that such a local average can eliminate local extrema of the original objective function. The area of local averaging is determined by the amplitudes of the employed dither signals. For future research, it might be promising to investigate schemes with adaptive dither amplitudes as in \cite{Ye2020}, \cite{Mimmo2023}. However, it is likely that such an adaptive mechanism will require some \emph{a priori} knowledge about the shape of the unknown objective function.

\setlength{\bibsep}{0pt}
\renewcommand*{\bibfont}{\normalfont\fontsize{8}{10}\selectfont}
\bibliographystyle{unsrt}
\bibliography{bibFile}

\hypertarget{appendix}{}
\appendix
\section*{Proofs of \texorpdfstring{\Cref{proposition:01,proposition:02}}{Proofs of Propositions~\ref{proposition:01} and~\ref{proposition:02}}}
In this appendix, we provide proofs of \Cref{proposition:01,proposition:02}. That is, we show that \cref{eq:16:a} and \cref{eq:33:a} are GAS. Note that each of the double integrators \cref{eq:16:a} and \cref{eq:33:a} can be equivalently written as a system of the form
\begin{equation}\label{eq:38}
\dot{x} \ = \ v, \qquad \dot{v} \ = \ - k\,v - \nabla{V(x)}
\end{equation}
with position $x\in\mathbb{R}^n$ and velocity $v\in\mathbb{R}^n$, where $k$ is a suitably defined positive real number and $V$ is a suitably defined smooth real-valued function on $\mathbb{R}^n$. Throughout this appendix, we assume that there exists $x^\ast\in\mathbb{R}^n$ such that
\begin{equation}\label{eq:39}
V(x) \ > \ V(x^\ast) \ = \ 0 \qquad \text{and} \qquad \nabla{V(x)} \ \neq \ 0
\end{equation}
for every $x\in\mathbb{R}^n$ with $x\neq{x^\ast}$, which corresponds to \Cref{assumption:03} (for $n=1$) and also to the first two items of \Cref{assumption:04} (for $n=2$). In addition, we assume that there exists a compact subset $C$ of $\mathbb{R}^n$ such that $\nabla{V(x)}^\top(x-x^\ast)\geq0$ for every $x\in\mathbb{R}^n$ with $x\notin{C}$. For $n=1$, this follows from \Cref{assumption:03}. For $n=2$, this is part of \Cref{assumption:04}.

In the first step, we show that every maximal solution of \cref{eq:38} remains in a compact set and is therefore forward-complete. To this end, define $B\colon\mathbb{R}^n\times\mathbb{R}^n\to\mathbb{R}$ by
\begin{align}
B(x,v) & \ := \ (1+k^2)\,(V(x)-V(x^\ast)) \label{eq:40} \\
& \qquad + \frac{1}{2}\begin{bmatrix} x-x^\ast \\ v \end{bmatrix}^\top\begin{bmatrix} k^2\,I & k\,I \\ k\,I & (1+k^2)\,I \end{bmatrix}\begin{bmatrix} x-x^\ast \\ v \end{bmatrix}. \nonumber
\end{align}
The function $B$ is continuously differentiable, positive definite, and radially unbounded. Hence, its sublevel sets are compact. Moreover, a direct computation shows that the derivative $\dot{B}\colon\mathbb{R}^n\times\mathbb{R}^n\to\mathbb{R}$ of $B$ along solutions of \cref{eq:38} is given by
\begin{equation}\label{eq:41}
\dot{B}(x,v) \ = \ -k^3\,\|v\|^2 - k\,\nabla{V(x)}^\top(x-x^\ast).
\end{equation}
Because of the assumption that there exists a compact subset $C$ of $\mathbb{R}^n$ such that $\nabla{V(x)}^\top(x-x^\ast)\geq0$ for every $x\in\mathbb{R}^n$ with $x\notin{C}$, we may conclude that every maximal solution of \cref{eq:38} remains in a compact set and is forward-complete.

Define the ``total energy'' function $E\colon\mathbb{R}^n\times\mathbb{R}^n\to\mathbb{R}$ of \cref{eq:38} by
\begin{equation}\label{eq:42}
E(x,v) \ := \ \tfrac{1}{2}\,|v|^2 + V(x).
\end{equation}
Because $V(x)>V(x^\ast)=0$ for every $x\in\mathbb{R}^n$ with $x\neq{x^\ast}$, the function $E$ is positive definite w.r.t.~$[x^\ast,0]^\top$. A direct computation shows that the derivative $\dot{E}\colon\mathbb{R}^n\times\mathbb{R}^n\to\mathbb{R}$ of $E$ along solutions of \cref{eq:38} is given by
\begin{equation}\label{eq:43}
\dot{E}(x,v) \ = \ - k\,|v|^2.
\end{equation}
By Lyapunov's direct method, it follows that \cref{eq:38} is stable w.r.t.~$[x^\ast,0]^\top$.

For the proof that \cref{eq:38} is GAS w.r.t.~$[x^\ast,0]^\top$, it remains to be shown that \cref{eq:38} is globally asymptotically attracted by~$[x^\ast,0]^\top$. To this end, let $[x,v]^\top\colon[\vphantom{]}0,\infty\vphantom{(})\to\mathbb{R}^n\times\mathbb{R}^n$ be a maximal solution of \cref{eq:38}. We already know that $[x,v]^\top$ remains in a compact set. By LaSalle's invariance principle, $[x(t),v(t)]^\top$ converges to the smallest invariant subset of
\begin{equation}\label{eq:44}
\big\{(\tilde{x},\tilde{v})\in\mathbb{R}^n\times\mathbb{R}^n \ \big| \ \dot{E}(\tilde{x},\tilde{v})=0\big\} \ = \ \mathbb{R}^n\times\{0\}
\end{equation}
as $t\to\infty$, which is here the singleton $\{(0,0)\}$. This proves the claim.
\end{document}